\newtheorem{thm}{Theorem}[section]
\newtheorem{ex}[thm]{Example}
\newtheorem{prop}[thm]{Proposition}
\newtheorem{conj}[thm]{Conjecture}
\newtheorem{df}[thm]{Definition}
\newcommand{\bnd}{\mathbf{bnd}}
\newcommand{\p}{\mathbf{P}}
\begin{document}

\nocite{*}

\title{Transformations of random walks on groups via Markov stopping time}

\author{Behrang Forghani\footnote{This work is supported by NSERC and  CRC (the Canada research chairs program).}\\
University of Ottawa, ON, Canada}

\maketitle

\begin{abstract}
 We describe a new construction of a family of measures on a group with the same Poisson boundary.
Our approach is based on
 applying  Markov stopping times to an extension of the original random walk.
 \end{abstract}
\section*{Introduction}
In 1963, Furstenberg \cite{Fu63} defined the Poisson boundary of a locally compact group $G$. His definition is based on continuous
bounded harmonic functions on $G$. In these terms, the triviality of the Poisson boundary is equivalent to the absence of nonconstant bounded harmonic functions (the Liouville property).
Later, an equivalent definition of the Poisson boundary appeared in the context
of random walks on groups. The first studies of random walks on groups can be traced to  \cite{DM61}, \cite{G63} and \cite{Ke67}.
However, it was in the early 1980s that great progress in the study of random walks on groups and their Poisson boundaries was made via, for example, the work of Kaimanovich and Vershik \cite{KV83}, Rosenblatt \cite{R81} and Derriennic \cite{D80}.

The simplest definition of the Poisson boundary of a group is due to Kaimanovich (e.g, see \cite{Ka92}).
He defined the Poisson boundary of $(G,\mu)$ as the space of ergodic components of the shift map on the space of sample paths.

The Poisson boundary of $(G,\mu)$ is denoted by $\Gamma(G,\mu)$ and is equipped with a $\mu$-stationary measure $\nu$ (which is often called harmonic), i.e.,
$$
\mu*\nu=\nu.
$$
However, the stationarity condition alone is not enough to  characterize the Poisson boundary. More precisely,
if there is another measure $\mu'$ on the group $G$ such that $\nu$ is $\mu'$-stationary too, then it does not necessarily imply that
$$
\Gamma(G,\mu)=\Gamma(G,\mu').
$$
On the other hand, it is known that the Poisson boundary of $(G,\mu)$ is the same as the Poisson boundary of $(G,\mu^{*n})$ and, more
generally, the Poisson boundary of $(G,\mu')$, where $\mu'$ is an (infinite) convex combination of convolution powers of $\mu$.
These observations give rise to the following problem:

\vspace{.75cm}

 \verb"Problem"~1: \textit{Given a group $G$ and measure $\mu$,
  let $\rho$ be a measure equivalent to the harmonic measure $\nu$ on the Poisson boundary $\Gamma$ of the
 random walk $(G,\mu)$. Does there exist a measure $\mu'$ on $G$ such that the corresponding Poisson boundary $(\Gamma(G,\mu'),\nu')$ is equal to $(\Gamma(G,\mu),\rho)$?}

\vspace{.3cm}

Problem~1 was studied by Furstenberg in \cite{Fu70} and Munchnik in \cite{Mu06}. In this paper, we are interested in the special case
of Problem~1 when $\rho$ is  equal to $\nu$. In other words,

\vspace{.5cm}

\verb"Problem"~2: \textit{Describe all the measures $\mu'$ on the group $G$ such that $\Gamma(G,\mu')=\Gamma(G,\mu)$.}

\vspace{.5cm}

We construct a large class of measures that satisfy the condition of Problem~2. In order to approach this goal, we  replace the original random walk with a new ``extended"  Markov chain and use a Markov stopping time for the extended chain. The set of measures which arises from this method  is closed under two operations: convex combination and convolution.

The paper has three sections. In Section~1, we set up notation and define random walks and
the Poisson boundary.

Section~2  is based on using Markov stopping times. Because of space homogeneity one can iterate a Markov stopping time. Therefore any Markov stopping time of a random walk yields a new random walk such that its Poisson boundary is the same as  the original Poisson boundary. However, the scope of this method is somewhat limited, because, for example, it does not produce random walks determined by
convex combinations of convolution measures (see Example~\ref{convolution}).

In Section~3, we generalize this method by considering
Markov stopping times for an extended Markov chain which ``covers'' the original random walk.
Namely, we extend random walks on a group $G$ to a Markov chain on the product $G\times X$ for an auxiliary  space $X$. This extension does not change the
Poisson boundary, and its advantage is that it gives us more freedom to define  Markov stopping times (Proposition~\ref{extension} and
Theorem~\ref{col}).

An application of this work is to show that the Poisson boundary of
a locally compact group $G$ with a spread-out measure is the same as the Poisson boundary of $G$ with a new measure
which is absolutely continuous with bounded density with respect to the Haar measure on $G$ (Theorem~\ref{willis}).
In a completely different context this fact had also been  proved by Willis \cite{W90} (without boundedness though).

Actually, all theorems in this paper can be proven for locally compact second countable groups, with precisely the same proofs, but for technical simplicity,
we deal just with countable groups.

\section*{Acknowledgment}
I am very grateful to my supervisor, Vadim Kaimanovich,
whose support  and patience have enabled me
to develop an understanding of the subject.

\section{Preliminaries}
In this section we recall the definition of a random walk on a countable group and its Poisson boundary. For more details we refer the reader to \cite{KV83} and \cite{Ka92} as well as the references therein. In this paper $G$ denotes a countable group with a probability measure $\mu$ such that
the support of $\mu$ generates $G$ as a group ( this is not a restrictive assumption, because one can always replace $G$ with the support of $\mu$).
 \begin{df}
 A random walk determined by a measure $\mu$ on a group $G$ is the Markov chain with transition probabilities
 $$
 p(g,h)=\mu(g^{-1}h).
 $$
\end{df}
These transition probabilities give rise to the measure $\p_{\mu}=\p$ on $G^{\infty}$, which is the image of the
Bernoulli measure $\mu^{\infty}$ under the map
$$
(h_1,h_2,\cdots)\mapsto(e,x_1,x_2,\cdots)
$$
where $x_n=h_1h_2\cdots h_n$.
The Lebesgue space $(G^{\infty},\p)$ is called the path (trajectory) space, and $\bar{h}=(h_1,h_2,\cdots)$ is the increment of the path
$(e,x_1,x_2,\cdots)$.
Two paths  $\bar{x}=(e,x_1,x_2,\cdots)$ and $\bar{x'}=(e,x'_1,x'_2,\cdots)$ are equivalent if there exist natural numbers $m$ and $n$ such that
$$
S^m\bar{x}=S^n\bar{x'},
$$
where $S$ is the left shift on the path space. The $\sigma$-algebra corresponding to this equivalence relation is denoted by $\mathcal{A}$ and called the Poisson $\sigma$-algebra.
According to the general Rokhlin theorem (e.g., see \cite{Ro67})
there exist a unique measurable space $\Gamma=\Gamma(G,\mu)$ and measurable map
$\bnd: G^{\infty}\to \Gamma$ such that
$$
 \mathcal{A}=\bnd^{-1}(\sigma(\Gamma)),
$$
where $\sigma(\Gamma)$ is the $\sigma$-algebra on $\Gamma$. Moreover, $\Gamma$ is equipped with the quotient measure
$$
\nu=\mbox{bnd}(\mathbf{P}),
$$
which is called harmonic.
\begin{df}
The space $(\Gamma,\nu)$ is called the Poisson boundary of $(G,\mu)$.
In other words, $(\Gamma,\nu)$ is the space of ergodic components of $S$.
\end{df}
Moreover, $\Gamma$ is endowed with a natural action of the group $G$ in such a way that  the harmonic measure $\nu$ is $\mu$-stationary, i.e.,
$$
\mu*\nu(\gamma)=\sum_g\mu(g)\nu(g^{-1}\gamma)=\nu(\gamma)
$$
for every $\gamma\in\Gamma$.
\begin{df}
A function $f$ on $G$ is $\mu$-harmonic if
$$
f(g)=\sum_hf(gh)\mu(h).
$$
Denote the space of all bounded $\mu$-harmonic functions on $G$  with the supremum  norm by
$H^{\infty}(G,\mu)$.
\end{df}
The following theorem shows the relation between harmonic functions and the Poisson boundary of a random walk.
\begin{thm}[e.g., see \cite{Ka92}]
The map
$$
f(g)= \langle\hat{f},\nu_g\rangle,\ \mbox{where}\ \ \nu_g=g\nu,
$$
establishes an isometric isomorphism of $H^{\infty}(G,\mu)$  and $L^{\infty}(\Gamma,\nu)$.
\end{thm}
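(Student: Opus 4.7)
The plan is to verify the isomorphism by exhibiting explicit inverse maps in both directions, with the nontrivial direction being the construction of $\hat f$ from $f$ via martingale convergence on the path space. Throughout, I would use the two basic facts already set up in the excerpt: the $\mu$-stationarity $\mu * \nu = \nu$, and the Rokhlin description of $\Gamma$ as the measurable image of $G^\infty$ under $\bnd$ corresponding to the $\sigma$-algebra $\mathcal A$ of ergodic components of the shift $S$.

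First I would check the easy direction $\hat f \mapsto f$. Given $\hat f \in L^\infty(\Gamma,\nu)$, put $f(g) = \int_\Gamma \hat f\, d\nu_g$. Since $\nu_g$ is a probability measure, $\|f\|_\infty \le \|\hat f\|_\infty$. Harmonicity of $f$ is a direct unwinding of stationarity: applying $\mu*\nu = \nu$ to the bounded function $\gamma \mapsto \hat f(g\gamma)$ gives
\[
f(g) = \int \hat f(g\gamma)\, d\nu(\gamma) = \sum_h \mu(h) \int \hat f(gh\gamma)\, d\nu(\gamma) = \sum_h \mu(h) f(gh).
\]

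Next, for the inverse direction, let $f \in H^\infty(G,\mu)$. On the path space $(G^\infty,\p)$ with the natural filtration $\mathcal F_n = \sigma(x_1,\ldots,x_n)$, the harmonicity of $f$ translates precisely to the statement that $M_n(\bar x) = f(x_n)$ is a bounded martingale. Doob's theorem yields a $\p$-a.s.\ limit $F_\infty$ with $|F_\infty|\le \|f\|_\infty$. The key structural step is to show that $F_\infty$ is $\mathcal A$-measurable modulo $\p$-null sets: concretely, $M_n\circ S$ and $M_{n+1}$ share the same tail limit, so $F_\infty$ is invariant under $S$ up to $\p$-null sets, and by the Rokhlin identification $\mathcal A = \bnd^{-1}\sigma(\Gamma)$ there exists a unique $\hat f \in L^\infty(\Gamma,\nu)$ with $F_\infty = \hat f\circ\bnd$ almost surely. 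Conditioning on $x_0 = g$ and taking expectations then recovers the Poisson formula $f(g) = E_g F_\infty = \int \hat f\, d\nu_g$, which simultaneously proves surjectivity and that the two constructions are inverse to one another.

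Finally, the reverse isometry $\|\hat f\|_\infty \le \|f\|_\infty$ follows because $|F_\infty|\le\|f\|_\infty$ $\p$-a.s., and pushing forward under $\bnd$ gives $|\hat f|\le\|f\|_\infty$ $\nu$-a.e.; combined with the opposite inequality already proved, this forces equality and also gives injectivity. The main obstacle is the middle step, namely verifying $S$-invariance of $F_\infty$ and interpreting it as a function on $\Gamma$; this is precisely the point where the definition of the Poisson boundary as the space of ergodic components of $S$ is essential, and it is what allows bounded harmonic functions on $G$ to be read off from asymptotic behavior of sample paths.
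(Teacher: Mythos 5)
The paper does not prove this statement at all: it is quoted as a known result with a pointer to Kaimanovich's survey, so there is no internal argument to compare against. Your proof is the standard one from that literature and is essentially correct in outline: stationarity gives harmonicity and the contraction $\|f\|_\infty\le\|\hat f\|_\infty$ in the easy direction, and bounded martingale convergence plus the identity $M_n\circ S=M_{n+1}$ gives an a.e.\ $S$-invariant limit $F_\infty$ that descends through $\bnd$ to an element of $L^\infty(\Gamma,\nu)$, with the Poisson formula recovered by optional conditioning. One step is glossed over more than it should be: your reverse inequality $|F_\infty|\le\|f\|_\infty$ only bounds the function $\hat f'$ \emph{recovered from} $f$, so to conclude the full isometry and injectivity of $\hat f\mapsto f$ you must know that the round trip $\hat f\mapsto f\mapsto\hat f'$ is the identity. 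That requires the identity $\mathbb{E}\bigl[\hat f\circ\bnd\mid\mathcal F_n\bigr]=\langle\hat f,\nu_{x_n}\rangle$, i.e.\ that the conditional distribution of the boundary point given the first $n$ positions is $\nu_{x_n}$; this follows from the Markov property, the $G$-equivariance of $\bnd$, and $\nu=\bnd(\p)$, but it is the real crux of the bijectivity and deserves an explicit line. A second, more cosmetic point: passing from ``$F_\infty\circ S=F_\infty$ a.e.'' to ``$F_\infty$ is $\mathcal A$-measurable mod $0$'' uses the standard correction lemma identifying a.e.-invariant functions with strictly invariant ones, which is implicit in the Rokhlin quotient description and worth acknowledging since $S$ is not $\p$-preserving.
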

The preceding theorem means that the Poisson boundary is trivial if and only if every bounded harmonic function is constant (the Liouville property).
  Dynkin and Maljutov  showed that the Poisson boundary of every nilpotent group for any measure $\mu$ is trivial \cite{DM61}, and  Furstenberg
   proved that every non-amenable group has a non-trivial Poisson boundary \cite{Fu63}. According to Kaimanovich, Vershik and Rosenblatt, for every amenable group there is a symmetric measure such that the corresponding random walk has trivial Poisson boundary (e.g., see \cite{KV83}).

Denote by $\mathcal{P}(G)$  the set of probability measures $\mu$ on $G$ such that the support of  $\mu$ generates $G$ as a group.
Any measure $\mu\in\mathcal{P}(G)$
gives rise to two natural subsets of $\mathcal{P}$(G):
$$
\mathfrak{P}=\mathfrak{P}(G,\mu)=\{\mu'\in \mathcal{P}(G)\ :\ \Gamma(G,\mu)=\Gamma(G,\mu')\}
$$
is the set of all measures whose Poisson boundary coincides (as a measure space) with the Poisson boundary of $\mu$, and
$$
\mathfrak{S}=\mathfrak{S}(G,\mu)=\{\mu'\in \mathcal{P}(G)\ :\ \mu'*\nu=\nu\}
$$ is the set of all measures such that the harmonic measure $\nu$ on $\Gamma(G,\mu)$ is stationary with respect to them.
As mentioned earlier,
$$
\mathfrak{P}\subset\mathfrak{S}.
$$
In some situations the classes $\mathfrak{P}$ and $\mathfrak{S}$ are (nearly) the same. For instance, if $G$ is a hyperbolic group,
then
$$
\mathfrak{P}\cap\mathfrak{M}=\mathfrak{S}\cap\mathfrak{M},
$$
where $\mathfrak{M}$ is the class of measures with a first finite moment \cite{K00}.[The question of the coincidence of the sets $\mathfrak{S}$ and
$\mathfrak{P}$ for  hyperbolic groups in full generality appears to be open, as it is not known whether for measures $\mu$ with infinite entropy
the unique $\mu$-stationary measure on the hyperbolic boundary makes it the Poisson boundary.]

However, this is not the case in general, which follows from the existence of amenable groups with non-trivial Poisson boundaries \cite{KV83}. More precisely,
let $G_k=\Bbb Z^k\rightthreetimes\sum_{\Bbb Z^k}\Bbb Z_2$ be the $k$-dimensional lamplighter group,
 which is amenable. Then, there is a measure $\mu$ such that
$\Gamma(G_k,\mu)$ is trivial and consequently,
$$
\mathfrak{S}(G,\mu)=\mathcal{P}(G).
$$
On the other hand, there are measures on $G_k$ with non-trivial Poisson boundary, so that
$$
\mathfrak{P}(G,\mu)\subsetneqq\mathfrak{S}(G,\mu)
$$

One can easily extend this example to non-amenable groups by considering the
group $H\times G_k$, where $H$ is a non-amenable group. Hence, the question is how big the set $\mathfrak{P}$ can be.

\section{Transformation of a random walk via Markov stopping time}
In this section, we recall the classic definition of a Markov time (see, e.g, \cite{DR84}) and show that how,
 in virtue of space homogeneity, it can be iterated to define a new random walk with the same Poisson boundary as the original random walk.
\begin{df}
A measurable non-negative integer-valued function $T$ on the path space is a Markov time if
$\{x:\ T(\bar{x})=n\}\subset \sigma(x_1,x_2,\cdots,x_n)$ for every $n$. We will require $T$ to be almost surely finite.
\end{df}

A constant function is the simplest example of a Markov time. If $A\subset G$, then the time when the set $A$ is first visited, i.e.,
$$
T_A(\bar{x})=\min_n\{x_n\in A\},
$$
is also a Markov time, which is almost surely finite if and only if the set $A$ is recurrent.

The space homogeneity property of  random walk allows us to define a new random walk by iterating a Markov stopping time.
The transformation of the random walk $(G,\mu)$ via a Markov stopping time can be described by using the Markov stopping time recursively:
\begin{df}
Let $T$ ba a Markov stopping time.
Define $T_1=T$. By induction,
$$
T_{i+1}(\bar{x})=T_{i}(\bar{x})+T(U^{T_i(\bar{x})}(\bar{x})),
$$
where $U$ is the  measure preserving transformation of the path space induced by the left shift in the
space of increments, i.e.,
$$
(U\bar{x})_n=x_1^{-1}x_{n+1}.
$$
\end{df}
Finally, $(x_{T_i})$ is the random walk  governed by the measure $\mu_T$ defined as
$$
\mu_T(h)=\p(x_T=h).
$$
We shall say that the random walk $(G,\mu_T)$ is the transformation of the random walk $(G,\mu)$ via the Markov time $T$.
The aim of the next theorem is to show that a transformation of a random walk via a Markov time does not change the Poisson boundary.
This theorem should be of no surprise to specialists in this area. We apply Doob's optional stopping theorem \cite{DW91}
to prove it.
\begin{thm}\label{Markov}
Let $T$ be a Markov time for the random walk $(G,\mu)$. Then the Poisson boundary of the random walk $(G,\mu)$ is the same as the Poisson boundary
of the random walk $(G,\mu_T)$.
\begin{proof}
It is sufficient to show that
$$
f\in H^{\infty}(G,\mu)\Longleftrightarrow f\in H^{\infty}(G,\mu_T).
$$
\begin{description}
\item[1)] Let $f\in H^{\infty}(G,\mu)$, then $\{f(x_n)\}$ is a martingale sequence. Doob's optional stopping theorem implies that
$$
\mathbb{E}(f(x_1))=\mathbb{E}(f(x_T)).
$$
Hence,
$$
f(x)=\sum_gf(xg)\mu(g)=\sum_gf(xg)\p(x_T=g),
$$
which means that $f$ is a  $\mu_T$-harmonic function.
\item[2)] If $f\in H^{\infty}(G,\mu_T)$, then $f(x)=\int_{\bar{g}}f(\bar{g}_T)\p_x(\bar{g})$. Consequently,
$$
\sum_hf(xh)\mu(h)=\sum_h\int_{\bar{g}}f(\bar{g}_T)d\p_{xh}(\bar{g})\mu(h)=\int_{\bar{g}}f(S(\bar{g})_T)d\p_{x}(\bar{g}),
$$
where $S$ is shift on the path space. By Doob's optional stopping theorem, the last term is equal to $\int_{\bar{g}}f(\bar{g}_T)\p_x(\bar{g})$. Thus, $f$ is $\mu$-harmonic.
\end{description}
 \end{proof}
\end{thm}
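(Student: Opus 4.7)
The plan is to invoke the isometric isomorphism $H^\infty(G,\mu)\cong L^\infty(\Gamma,\nu)$ recalled just before the statement, which reduces equality of Poisson boundaries to the equality of the associated spaces of bounded harmonic functions $H^\infty(G,\mu)=H^\infty(G,\mu_T)$. I would prove both inclusions via martingale arguments, with Doob's optional stopping theorem as the main mechanism; the boundedness of $f$ together with almost-sure finiteness of $T$ guarantees that optional stopping applies without any integrability issues.

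For $H^\infty(G,\mu)\subset H^\infty(G,\mu_T)$, I would take a bounded $\mu$-harmonic $f$. Then $n\mapsto f(x_n)$ is a bounded $\mathbb{P}_x$-martingale for every starting point $x$, so optional stopping gives $\mathbb{E}_x[f(x_T)]=f(x)$. Expanding the left-hand side against $\mu_T(h)=\mathbb{P}(x_T=h)$ and using space homogeneity yields $f(x)=\sum_h f(xh)\mu_T(h)$, which is precisely $\mu_T$-harmonicity.

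For the reverse inclusion $H^\infty(G,\mu_T)\subset H^\infty(G,\mu)$, let $f$ be bounded and $\mu_T$-harmonic. The key observation is that under $\mathbb{P}_x$ the iterated process $y_i=x_{T_i}$ is itself the $(G,\mu_T)$-random walk started at $x$, so $f(y_i)=f(x_{T_i})$ is a bounded martingale with respect to the filtration $\mathcal{F}_{T_i}$. By the bounded martingale convergence theorem there is an almost-sure and $L^1$ limit $f_\infty$ with $\mathbb{E}_x[f_\infty]=f(x)$. Since $T_i\nearrow\infty$ almost surely, $f_\infty$ is measurable with respect to the tail $\sigma$-algebra of the original $\mu$-walk, and hence factors as $\phi\circ\bnd$ for some bounded $\phi$ on $\Gamma(G,\mu)$. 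The identity $f(x)=\mathbb{E}_x[\phi(\bnd(\bar{x}))]$ then exhibits $f$ as a Poisson integral, and the preceding isomorphism theorem delivers $f\in H^\infty(G,\mu)$.

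The main obstacle is the tail-measurability step in the reverse direction: one must verify that $\bigcap_N \sigma(y_N,y_{N+1},\ldots)\subset \bigcap_N\sigma(x_N,x_{N+1},\ldots)$. This holds because $T_i\to\infty$ almost surely, so the iterated process only ``sees'' arbitrarily late coordinates of the underlying $\mu$-walk, but making this precise requires a careful use of the strong Markov property and the definition of the Poisson $\sigma$-algebra as the intersection of shift-tail $\sigma$-algebras. Once this identification is in place, the remaining manipulations are standard.
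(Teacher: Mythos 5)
Your first direction coincides with the paper's proof: for bounded $\mu$-harmonic $f$ the sequence $f(x_n)$ is a bounded martingale, optional stopping at the almost surely finite time $T$ gives $\mathbb{E}_x[f(x_T)]=f(x)$, and expanding against $\mu_T$ yields $\mu_T$-harmonicity. That part is fine and is exactly the paper's step 1).

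The reverse inclusion is where you depart from the paper, and where there is a genuine gap. The paper stays at the level of a one-step computation: it writes $f(x)=\int f(\bar g_T)\,d\p_x(\bar g)$, shows $\sum_h f(xh)\mu(h)=\int f(S(\bar g)_T)\,d\p_x(\bar g)$, and identifies the latter with $\int f(\bar g_T)\,d\p_x(\bar g)$ by optional stopping. You instead pass to the martingale limit $f_\infty=\lim_i f(x_{T_i})$ and want it to be measurable with respect to the Poisson $\sigma$-algebra $\mathcal{A}$ of the $\mu$-walk; the step you yourself flag as ``the main obstacle'' is in fact the entire content of this direction, and your proposed reduction does not go through as stated. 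First, the inclusion $\bigcap_N\sigma(y_N,y_{N+1},\dots)\subset\bigcap_N\sigma(x_N,x_{N+1},\dots)$ fails as an inclusion of honest $\sigma$-algebras: $y_N=x_{T_N}$ is $\mathcal{F}_{T_N}$-measurable and $T_N$ is determined by the initial segment $x_1,\dots,x_{T_N}$ of the path (take $T$ to be a hitting time to see this), so the process $(y_N)$ ``sees'' arbitrarily early coordinates. The inclusion could only hold mod $0$, and establishing that is essentially equivalent to the theorem. Second, even granting tail-measurability, the tail $\sigma$-algebra is larger than the Poisson $\sigma$-algebra $\mathcal{A}$ of Section~1, which consists of shift-invariant events: a bounded, merely tail-measurable $\Phi$ produces only a space-time harmonic function via $x\mapsto\mathbb{E}_x[\Phi]$, since conditioning on the first step replaces $\Phi$ by its representative as a function of $(x_1,x_2,\dots)$ rather than $(x_0,x_1,\dots)$. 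To get genuine $\mu$-harmonicity you need $f_\infty\circ S=f_\infty$ almost surely, i.e.\ that $\lim_i f(x_{T_i(\bar x)})$ and $\lim_i f(x_{1+T_i(S\bar x)})$ agree; these are limits of $f(x_n)$ along two different random subsequences, and $f(x_n)$ is not known to converge (that would already require $f$ to be $\mu$-harmonic, which is what you are trying to prove). So the reverse direction needs either the paper's direct computation of $\sum_h f(xh)\mu(h)$ or an actual proof of the shift-invariance of $f_\infty$; invoking $T_i\to\infty$ alone does not supply it.
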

The following result is stated without proof in \cite{KV83}.
\begin{ex}\label{convolution}
Let $n$ be a positive integer. Then $\Gamma(G,\mu)=\Gamma(G,\mu^{*n})$
\end{ex}
\begin{proof}
Define  $T$ as the constant function $n$. Then $\mu_T=\mu^{*n}$.
\end{proof}
\begin{ex}
Let $T_1$ and $T_2$ be two Markov times for the random walk $(G,\mu)$. Then $\Gamma(G,\mu)=\Gamma(G,\mu_{T_1}*\mu_{T_2})$.
\end{ex}
\begin{proof}
Let $T=T_1+T_2$. Then $\mu_T=\mu_{T_1}*\mu_{T_2}$.
\end{proof}
\begin{ex}\label{main}
Let $\mu=\alpha+\beta$, where $\alpha$ and $\beta$ are mutually singular and $|\alpha|,\ |\beta|<1$. If $\mu'=(1-\alpha)^{-1}*\beta$, then
the random walks $(G,\mu)$ and $(G,\mu')$ have the same Poisson boundary.
\end{ex}
\begin{proof}
Let $A=\mbox{supp}(\alpha)$ and $B=\mbox{supp}(\beta)$. Define the function $T$  on the path space of the random walk as
$$
T\bar{x}=\min_i\{i>0\ :\ h_i\in B\},
$$
so that $T$ is the first time when the increment of the random walk $(G,\mu)$ belongs to the set $B$.
Hence, Theorem~\ref{Markov} implies that the Poisson boundaries of $(G,\mu)$ and $(G,\mu_T)$ are the same.
By elementary probability considerations and definition of convolution measures then
$\mu_T=(1-\alpha)^{-1}*\beta$.
\end{proof}
Generally speaking, Theorem \ref{Markov} does not produce all the measures from the set of $\mathfrak{P}$. For instance, a convex combination of convolution powers of $\mu$ has been claimed that has the same Poisson boundary as $\mu$ \cite{KV83}. We will also prove this fact later in Section~3.
However, there are convex combinations of convolutions which can not be obtained from Theorem~\ref{Markov}.
\begin{ex}\label{counter}
 Let $G=\Bbb Z_2$ and $\mu=\delta_1$. Then $(0,1,0,1,\cdots)$ is the only path, hence every  Markov stopping time is constant,
 therefore Theorem~\ref{Markov} only produces convolution powers of $\mu$, i.e., $\delta_0$ and $delta_1$. Therefore, the measure
 $\frac{1}{2}(\delta_0+\delta_1)$ can not be obtained in this way.
 \end{ex}

\section{An extension of a random walk}
In this section, we introduce yet another method
 (a generalization of that from Section~2) to construct measures on the group $G$ such that their Poisson boundaries are the same as
 the original Poisson boundary, i.e., they  belong to the set $\mathfrak{P}$ which is introduced in Section~1.

The idea is to extend a random walk on $G$ to a Markov chain on the space  $G\times X$ without changing the Poisson boundary and to use a Markov time on the extension of the random walk. This idea is inspired by Kaimanovich \cite{K92}. We will introduce  new transition probabilities $\pi_{g,x}$ on the space $G\times X$ which are independent of the $x\in X$ and whose projections onto $G$ give the same random walk  on $G$.
\begin{prop}\label{extension}
Let $(X,m)$ be a Lebesgue space. Then the Markov chain with transition probabilities
$$
\pi_{g,x}=\sum_h\mu(h)\delta_{gh}\otimes m
$$
on $(G\times X, \mu\otimes m)$ has the same Poisson boundary as $(G,\mu)$, where $\otimes$ denotes the product measure.
\end{prop}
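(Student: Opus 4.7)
The plan is to identify the bounded harmonic functions of the extended chain with $H^\infty(G,\mu)$ and then invoke the isomorphism between bounded harmonic functions and $L^\infty$ of the Poisson boundary stated in the preliminaries.

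First, I would write out what it means for a bounded measurable function $F$ on $G\times X$ to be harmonic for the kernel $\pi$, namely
$$
F(g,x)=\int F(g',y)\,d\pi_{g,x}(g',y)=\sum_h\mu(h)\int_X F(gh,y)\,dm(y).
$$
The crucial observation is that the right-hand side is independent of $x$. Therefore any bounded $\pi$-harmonic function must be of the form $F(g,x)=f(g)$ for some bounded function $f$ on $G$. Substituting back, the harmonicity condition becomes $f(g)=\sum_h\mu(h)f(gh)$, so $f\in H^\infty(G,\mu)$. Conversely, any $f\in H^\infty(G,\mu)$ lifts to a bounded $\pi$-harmonic function on $G\times X$ by $F(g,x):=f(g)$. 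This gives a canonical isometric bijection $H^\infty(G\times X,\pi)\cong H^\infty(G,\mu)$.

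Next I would observe that $G$ acts on $G\times X$ by left translation in the first coordinate, that this action commutes with $\pi$ (since $\pi_{g,x}$ depends only on $g$ through the measure $\mu$ and on the $\mu\otimes m$-distributed transition), and that the bijection above is $G$-equivariant. Invoking the analog for Markov chains of the theorem relating $H^\infty$ to $L^\infty$ of the Poisson boundary (which applies in the same way to $\pi$ as it does to the group random walk), one concludes that the Poisson boundary of the extended chain is isomorphic as a $G$-measure space to $(\Gamma(G,\mu),\nu)$. Concretely, the projection $(G\times X)^\infty\to G^\infty$ intertwines the shifts and descends to the identification of the two boundaries.

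The step I expect to require the most care is making precise the passage from the isomorphism of spaces of bounded harmonic functions to the equality of Poisson boundaries in the Rokhlin/ergodic-components formulation used in Section~1; once one accepts the extension of the harmonic-functions-versus-boundary correspondence to the Markov chain on $G\times X$, the rest is a direct computation. The actual computational content of the proof is just the observation that $\pi_{g,x}$ has no $x$-dependence, so the product structure $\mu\otimes m$ forces every harmonic function to forget the auxiliary coordinate.
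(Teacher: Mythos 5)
Your proposal is correct and follows essentially the same route as the paper: both arguments exploit the fact that $\pi_{g,x}$ does not depend on $x$ to identify bounded $\pi$-harmonic functions on $G\times X$ with $H^{\infty}(G,\mu)$, and then conclude via the correspondence between bounded harmonic functions and the Poisson boundary. If anything, you are slightly more explicit than the paper in noting that the $x$-independence of the right-hand side of the harmonicity equation forces \emph{every} bounded $\pi$-harmonic function to be a lift of a function on $G$, which is the point that makes the correspondence a bijection.
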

\begin{proof}
Let $f$ (resp., $F$) be a function on $G$ (resp., $G\times X$). We extend (resp., project) $f$ (resp., $F$) to a  function on $G\times X$ (resp., $G$) as
$$
F(g,x)=f(g).
$$
Since the transition probabilities $\{\pi_{g,x}\}$ do not depend on $x$, a simple calculation shows
that $F$ is a harmonic function with respect to the transition probabilities $\{\pi_{g,x}\}$ if and only if
$f$ is a $\mu$-harmonic function on $G$. Thus, the Poisson boundaries of $G$ and $G\times X$ are the same.
\end{proof}
Theorem~\ref{Markov} was proven for a random walk. However, the proof is
applicable {\it verbatim} to the  Markov chain $\{\pi_{g,x}\}$. Thus, it follows that:
\begin{thm}\label{col}
If $T$ is a Markov time on the space $(G\times X, \mu\otimes m)$, then the projection of the corresponding Markov chain onto $G$ has Poisson boundary $\Gamma(G,\mu)$.
\end{thm}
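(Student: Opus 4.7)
The plan is to combine Proposition~\ref{extension} with the verbatim extension of Theorem~\ref{Markov} noted in the paragraph preceding the statement. I never need to analyse the $G$-projection as a random walk directly; all of the work is carried out at the level of bounded harmonic functions, where the two preceding results fit together cleanly.

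First, by Proposition~\ref{extension}, the extended chain $(G\times X,\pi_{g,x})$ has Poisson boundary $\Gamma(G,\mu)$, and the identification is realized by the map $f\mapsto F$ with $F(g,x)=f(g)$. A direct computation (as in the proof of that proposition) shows that every bounded harmonic function $F$ for $\pi$ actually depends only on $g$, because $\pi_{g,x}$ is independent of $x$. Thus this map is an isometric isomorphism $H^{\infty}(G,\mu)\cong H^{\infty}(G\times X,\pi)$.

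Second, I would rerun the proof of Theorem~\ref{Markov} with $(G,\mu)$ replaced by $(G\times X,\pi)$. The two inputs used there---space-homogeneity of the chain (which permits the iteration $T_{i+1}=T_i+T\circ U^{T_i}$ and makes $(g_{T_i},x_{T_i})$ a Markov chain) and Doob's optional stopping theorem applied to the bounded martingale $\{F(g_n,x_n)\}$---both remain valid: $\pi$ is homogeneous in the $G$-coordinate, $T$ is almost surely finite by hypothesis, and bounded $\pi$-harmonic functions $F$ still give uniformly bounded martingales. Exactly the argument of Theorem~\ref{Markov} then shows that a bounded function on $G\times X$ is harmonic for the iterated kernel iff it is harmonic for $\pi$. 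Combined with the first step, the Poisson boundary of the iterated chain on $G\times X$ is again $\Gamma(G,\mu)$.

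Finally, since every bounded harmonic function produced above depends only on $g$ and has the form $F(g,x)=f(g)$ with $f\in H^{\infty}(G,\mu)$, the boundary can be read off from the $G$-coordinate alone; equivalently, the $G$-projection of the iterated chain has Poisson boundary $\Gamma(G,\mu)$. The main subtlety to watch is that $T$ is allowed to depend genuinely on the auxiliary coordinates $x_1,x_2,\dots$, so the stopped $G$-increments need not be i.i.d. and the projection need not be a Markov chain on $G$ in any obvious sense. This is precisely the extra flexibility that motivates passing to $G\times X$, and it is the reason the proof is phrased at the level of harmonic functions (which collapse to functions on $G$ by the proposition) rather than at the level of increment distributions.
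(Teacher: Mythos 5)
Your proposal is correct and follows essentially the same route as the paper: the paper likewise proves Theorem~\ref{col} by citing Proposition~\ref{extension} and observing that the proof of Theorem~\ref{Markov} applies \emph{verbatim} to the extended chain $\{\pi_{g,x}\}$, with the same remark that the $G$-projection being a Markov chain is the delicate point. Your version merely spells out in more detail what ``verbatim'' means, which is a reasonable elaboration rather than a different argument.
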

Note that although the projection of a Markov chain is not necessarily a Markov chain, this is the case
in the preceding theorem.

Example \ref{counter} shows that Theorem~\ref{Markov} does not necessarily produce the convex combinations of convolution powers.
But we can produce them with  the extension method.

\begin{ex}\label{convex}
Let $\mu'=\sum_na_n\mu^{*n}$, where $a_n\geq0$ and $\sum_na_n=1$. Then the
random walks $(G,\mu)$ and $(G,\mu')$ have the same Poisson boundary.
\end{ex}
\begin{proof}
It is enough to define $X$, $m$ and $T$ (as in Theorem~\ref{col}) as $X=\{b_1,b_2,b_3,\cdots\}$, $m=\sum_ia_i\delta_{b_i}$ and
$$
T((h_1,\gamma_1),(h_1h_2,\gamma_2),\cdots)=\gamma_1.
$$
\end{proof}
The second example is the general case of Example~\ref{main}. To prove this, we required the measures $\alpha$
and $\beta$ to be mutually singular. However, with our extension method, this assumption is not necessary.

\begin{ex}\label{main2}
Let $\mu=\alpha+\beta$ be such that $|\alpha|,\ |\beta|<1$. If $\mu'=(1-\alpha)^{-1}*\beta$, then $\Gamma(G,\mu)=\Gamma(G,\mu')$.
\end{ex}
\begin{proof}
We pass from the path
$(x_n)$ to $(x_n,\gamma_n)$, where
$\gamma=(\gamma_n)$ is a sequence of i.i.d. random variables independent of the random walk $(G,\mu)$ and which have the Lebesgue  measure as their common distribution. In other words, $X=(0,1)$ with Lebesgue measure $m$.
It is easy to see that there exists a map $I$ from ${supp}(\mu)$ to the subintervals of $(0,1)$ such that
\begin{itemize}
\item $\{I_{g}\}$ is a partition of the interval $(0,1)$, and
\item for every $g\in G$, there exist intervals $A_g,B_g\subset I_g$ such that
$$
I_g=A_g\cup B_g,\ \ \  A_g\cap B_g=\emptyset,
$$
$\alpha(g)=|A_g|$ and $\beta(g)=|B_g|$.
\end{itemize}
Define a Markov time $T$ as
$$
T=\min_n\{n>0\ :\ \gamma_n\in I(B)\}.
$$
Now, by following the same steps as in Example~\ref{main}, we can show that $\mu'=\mu_T$.
Thus, Theorem~\ref{col} implies that $\Gamma(G,\mu)=\Gamma(G,\mu')$.
\end{proof}
This example has an application to random walks on locally compact groups. In \cite{W90}, Willis introduced the measure $\mu'=(1-\alpha)^{-1}*\beta$, which he used to show that
 $\Gamma(G,\mu)$, where $G$ is a locally compact, second countable group and $\mu$ is a spread-out measure,
 is the same as  $\Gamma(G,\mu'')$, where $\mu''$ is absolutely continuous with respect to the Haar measure on $G$. His proof uses Cohen's factorization and other tools taken from a totally different context. However, using Example~\ref{main2}, we give the same result and can even show that the density function of the absolutely continuous measure with respect to the Haar measure can always be chosen to be bounded.
\begin{thm}\label{willis}
Let $G$ be a locally compact second countable group with Haar measure $m$. If $\mu$ is a spread-out measure on $G$, then there exists a measure $\mu'$ absolutely continuous with respect to $m$ and with bounded density such that $\Gamma(G,\mu)=\Gamma(G,\mu')$.
\end{thm}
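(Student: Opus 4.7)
The plan is to combine the extension construction of Example~\ref{main2} with the standard fact that a spread-out measure has a convolution power with a nontrivial absolutely continuous component. Explicitly, since $\mu$ is spread-out there exists $n\geq 1$ such that $\mu^{*n}$ has a nonzero absolutely continuous part with respect to the Haar measure $m$; let $f$ denote the Radon--Nikodym density of that part. Pick a Borel set $E\subset G$ of finite positive Haar measure and some $\varepsilon>0$ such that $f\geq \varepsilon$ on $E$, and define the sub-probability measure
$$
\beta \;=\; \varepsilon\, \mathbf{1}_E\cdot m,
$$
which is by construction a non-negative measure with bounded density $g=\varepsilon \mathbf{1}_E$ and with $0<|\beta|\leq \varepsilon\, m(E)\leq 1$. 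Set $\alpha=\mu^{*n}-\beta$; by choice of $E$ and $\varepsilon$, $\alpha$ is a non-negative measure, and $|\alpha|+|\beta|=1$, so $|\alpha|,|\beta|<1$ (if $|\beta|$ turns out to equal $1$, then $\mu^{*n}$ itself already has bounded density and we are done).

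Next, I apply Example~\ref{convolution} to get $\Gamma(G,\mu)=\Gamma(G,\mu^{*n})$, and then Example~\ref{main2} to the decomposition $\mu^{*n}=\alpha+\beta$ to obtain
$$
\mu' \;=\; (1-\alpha)^{-1}*\beta \;=\; \sum_{k=0}^{\infty}\alpha^{*k}*\beta,
$$
which is a probability measure satisfying $\Gamma(G,\mu^{*n})=\Gamma(G,\mu')$, and therefore $\Gamma(G,\mu)=\Gamma(G,\mu')$. (Here I rely on the remark in the introduction that these constructions carry over verbatim from countable to locally compact second countable groups.)

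It remains to verify that $\mu'$ is absolutely continuous with bounded density. Since $\beta$ is absolutely continuous with density $g$, each convolution $\alpha^{*k}*\beta$ is absolutely continuous with density
$$
(\alpha^{*k}*g)(x) \;=\; \int g(h^{-1}x)\,d\alpha^{*k}(h),
$$
and this density is bounded pointwise by $\|g\|_\infty\cdot|\alpha^{*k}|=\|g\|_\infty\cdot|\alpha|^k$. Summing the geometric series,
$$
\Big\|\tfrac{d\mu'}{dm}\Big\|_\infty \;\leq\; \sum_{k=0}^{\infty}\|g\|_\infty\cdot |\alpha|^k \;=\; \frac{\|g\|_\infty}{1-|\alpha|} \;<\;\infty,
$$
so $\mu'$ is absolutely continuous with bounded density with respect to $m$, as claimed.

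The main obstacle is not conceptual but bookkeeping: one must be careful to truncate the absolutely continuous part of $\mu^{*n}$ so that the chosen $\beta$ has honestly bounded density (and not merely an $L^1$ density), which is precisely what lets the Neumann-series estimate produce a bounded density for $\mu'$. Everything else reduces to the already-established Examples~\ref{convolution} and~\ref{main2}, together with the standard remark that convolution of a measure with an $L^\infty$ density against any probability measure preserves the $L^\infty$ bound.
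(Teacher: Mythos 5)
Your proposal is correct and follows essentially the same route as the paper: pass to a convolution power $\mu^{*n}$ that is non-singular with respect to $m$, carve out a sub-measure $\beta\leq\mu^{*n}$ with bounded density, and apply Example~\ref{main2} to $\mu^{*n}=\alpha+\beta$. The only cosmetic difference is that the paper obtains $\beta$ by restricting the absolutely continuous part to the set where its density is below a threshold $c$, whereas you flatten it to $\varepsilon\mathbf{1}_E$ on a set where the density exceeds $\varepsilon$; your explicit Neumann-series bound on $\|d\mu'/dm\|_\infty$ is the (implicit) reason the paper's construction yields bounded density.
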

\begin{proof}
Since $\mu$ is spread-out, there exists an $n$ such that $\mu^{*n}$ is not singular with respect to $m$. Now,
Lebesgue's decomposition theorem yields
$$
\mu^{*n}=\upsilon+\tau,
$$
where $\tau\prec m$.
 Choose $c>0$ such that the set
$$
B=\{g\in G\ :\ \frac{d\tau}{dm}<c\}
$$
has positive measure. Now, it is enough to choose $\alpha=\upsilon+\tau|_{B^c}$  and $\beta=\tau|_{B}$ as in Example~\ref{main2}.
\end{proof}

We have shown that the set of measures which come  directly  or indirectly (after extension) from a Markov stopping time is closed
under infinite convex combination and convolution. However, we do not know if the set $\mathfrak{P}$ is closed under convolution or convex combination.
Further work in this area could include proving or disproving that all measures in $\mathfrak{P}$ come from measures in Corollary~\ref{col} or
that the set $\mathfrak{P}$ is closed under convex convolution. Hence, there are two conjectures:
\begin{conj}
The set $\mathfrak{P}$ is closed under convolution and convex combination.
\end{conj}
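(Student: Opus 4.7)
The plan is to attack both closure properties via the identification $H^{\infty}(G,\mu)\cong L^{\infty}(\Gamma,\nu)$, which reduces $\Gamma(G,\mu')=\Gamma(G,\mu)$ to the equality of the spaces of bounded harmonic functions inside $L^{\infty}(G)$. Let $\mu_1,\mu_2\in\mathfrak{P}(G,\mu)$. The easy inclusion $H^{\infty}(G,\mu)\subseteq H^{\infty}(G,\mu')$ holds for both operations: every $f\in H^{\infty}(G,\mu)=H^{\infty}(G,\mu_i)$ already satisfies $P_{\mu_i}f=f$, hence $P_{\mu_1*\mu_2}f=P_{\mu_1}P_{\mu_2}f=f$ and $P_{\lambda\mu_1+(1-\lambda)\mu_2}f=\lambda P_{\mu_1}f+(1-\lambda)P_{\mu_2}f=f$. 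The content of the conjecture lies in the reverse inclusion.

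For the convex combination $\mu'=\lambda\mu_1+(1-\lambda)\mu_2$ I would try to prove the stronger intermediate statement that every bounded $\mu'$-harmonic function is automatically $\mu_i$-harmonic for each $i$; granting this, $f\in H^{\infty}(G,\mu')$ gives $f\in H^{\infty}(G,\mu_i)=H^{\infty}(G,\mu)$. The starting point is a maximum-principle argument: in the identity $f=\lambda P_{\mu_1}f+(1-\lambda)P_{\mu_2}f$, evaluating at (or along a sequence approaching) a near-supremum $g_0$ forces $P_{\mu_i}f(g_0)$ to be close to $\|f\|_{\infty}$, which in turn pushes the near-supremum onto $g_0\cdot\mathrm{supp}(\mu_i)$. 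Iterating this propagation and combining with the fact that $\mathrm{supp}(\mu_1)\cup\mathrm{supp}(\mu_2)$ generates $G$ as a group, the aim is to close the gap $P_{\mu_i}f-f\equiv 0$ globally; one would also apply the same argument to $-f$ to handle the infimum.

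For the convolution $\mu'=\mu_1*\mu_2$ I would work on the alternating extension, namely the Markov chain on $G\times\{1,2\}$ whose transition from $(g,x)$ sends one to $(gh,3-x)$ with probability $\mu_x(h)$. Its bounded harmonic functions are pairs $(f_1,f_2)$ satisfying $f_1=P_{\mu_1}f_2$, $f_2=P_{\mu_2}f_1$, so that $f_1$ ranges over $H^{\infty}(G,\mu_1*\mu_2)$ and determines $f_2$ via $P_{\mu_2}$. The task is then to identify the Poisson boundary of this alternating chain with $\Gamma(G,\mu)$ by comparing its tail $\sigma$-algebra with that of the $\mu_1$-walk (whose boundary equals $\Gamma(G,\mu)$ by hypothesis), exploiting that $\mu_2$-steps intercalated between $\mu_1$-steps should not alter the direction of escape at infinity.

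The main obstacle in both parts is the same: bridging the gap between the \emph{local} consequences of the harmonicity equation --- extremal-point equalities, or iterated identities among the operators $P_{\mu_1}$ and $P_{\mu_2}$ --- and the \emph{global} identification of $\sigma$-algebras of bounded harmonic functions. The criteria typically available to effect such an identification (the Kaimanovich ray or strip criteria, the entropy criterion of Kaimanovich--Vershik) all presuppose moment or geometric hypotheses on $(G,\mu)$ that are not granted in the generality of the conjecture. Resolving the conjecture in full would plausibly require a genuinely new boundary-identification mechanism, which is presumably the reason the author leaves it open.
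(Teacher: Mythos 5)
The statement you were given is a \emph{conjecture} in the paper: the author explicitly writes that he does not know whether $\mathfrak{P}$ is closed under convolution or convex combination, and supplies no proof. So there is no argument of the paper to compare yours against, and your text, appropriately, does not claim to be a proof either --- it is a plan whose two central steps you yourself flag as unresolved. Your preliminary reduction is sound: since $\mu_1,\mu_2\in\mathfrak{P}(G,\mu)$ force $H^{\infty}(G,\mu_1)=H^{\infty}(G,\mu_2)=H^{\infty}(G,\mu)$ via the Poisson formula, the inclusion $H^{\infty}(G,\mu)\subseteq H^{\infty}(G,\mu')$ for $\mu'=\mu_1*\mu_2$ or $\lambda\mu_1+(1-\lambda)\mu_2$ is immediate (it only says $\Gamma(G,\mu)$ is a $\mu'$-boundary), and the entire difficulty is the reverse inclusion, i.e.\ maximality.

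Both of your strategies for that reverse inclusion have a concrete hole. For the convex combination, the maximum-principle propagation needs the supremum of $f$ to be attained, which on an infinite group it generally is not; in the approximate version, from $f(g_0)\geq \sup f-\varepsilon$ you only deduce $f(g_0h)\geq \sup f-\varepsilon/\bigl(\min(\lambda,1-\lambda)\,\mu_i(h)\bigr)$, so the error is multiplied by an unbounded factor at each step and the propagation never covers $G$. Moreover, even with an attained supremum the argument would only locate a set where $f$ is constant, not establish $P_{\mu_i}f=f$. A further warning sign is that your intermediate claim never uses the hypothesis $\mu_i\in\mathfrak{P}(G,\mu)$: if valid, it would give $H^{\infty}(G,\lambda\mu_1+(1-\lambda)\mu_2)\subseteq H^{\infty}(G,\mu_1)\cap H^{\infty}(G,\mu_2)$ for arbitrary measures, an unconditional statement that is itself a well-known open problem (it would settle stability of the Liouville property under convex perturbation of the step distribution). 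For the convolution, the alternating chain on $G\times\{1,2\}$ is a fair reformulation, but identifying its Poisson boundary with $\Gamma(G,\mu)$ is the original question in disguise: its two-step subchain is precisely the $\mu_1*\mu_2$-walk, while the hypothesis only controls the pure $\mu_1$- and $\mu_2$-walks, and, as you note, the standard identification criteria require moment or geometric hypotheses not granted here. In short, nothing you wrote is incorrect, but it does not prove the statement, which remains open in the paper as well.
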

\begin{conj}
Every measure $\mu$ in the set $\mathfrak{P}$ can be obtained using Theorem~\ref{col}.
\end{conj}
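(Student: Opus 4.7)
The plan is to reduce the statement to Example~\ref{main2} via the Lebesgue decomposition of a sufficiently high convolution power of $\mu$, and then argue that the resulting measure not only is absolutely continuous (as Willis proved) but actually has bounded Radon--Nikodym derivative. The spread-out hypothesis means exactly that for some $n$, the measure $\mu^{*n}$ is not singular with respect to $m$, so Lebesgue's decomposition theorem gives $\mu^{*n} = \upsilon + \tau$ with $\upsilon \perp m$ and $0 \neq \tau \prec m$, with density $d\tau/dm$.

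Next, I would truncate the density to manufacture a bounded piece: pick $c > 0$ large enough so that $B = \{g : d\tau/dm(g) < c\}$ has positive $\tau$-mass, and split $\mu^{*n} = \alpha + \beta$ with $\alpha = \upsilon + \tau|_{B^{c}}$ and $\beta = \tau|_B$. By construction $\beta \prec m$ with density bounded by $c$, and both $|\alpha|, |\beta| > 0$ (unless $\mu^{*n}$ itself is already absolutely continuous with bounded density, in which case $\mu' = \mu^{*n}$ works by Example~\ref{convolution} directly). Example~\ref{main2}, which extends to the locally compact setting by the remark at the end of the Introduction, then produces the measure $\mu' = (1-\alpha)^{-1} * \beta$ with $\Gamma(G,\mu^{*n}) = \Gamma(G,\mu')$; combined with $\Gamma(G,\mu) = \Gamma(G,\mu^{*n})$ from Example~\ref{convolution}, this gives the equality of Poisson boundaries.

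It remains to verify the boundedness claim, which is the genuinely new content compared to Willis. Writing $(1-\alpha)^{-1} = \sum_{k\geq 0} \alpha^{*k}$, one has $\mu' = \sum_{k\geq 0} \alpha^{*k} * \beta$. Convolution of the probability-type measure $\alpha^{*k}$ of total mass $|\alpha|^k$ with an $m$-absolutely continuous measure whose density is bounded by $c$ yields an absolutely continuous measure whose density is pointwise bounded by $c \cdot |\alpha|^k$ (this is just Fubini applied to the convolution formula for densities). Summing, the density of $\mu'$ is bounded by $c / (1-|\alpha|) = c/|\beta|$, which is finite.

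The main technical nuisance I anticipate is simply keeping track of the edge cases that make $|\alpha|$ or $|\beta|$ equal to $0$ or $1$, since Example~\ref{main2} is stated for strict inequalities. These are easy to dispose of by hand (either $\mu^{*n}$ is already of the desired form, or $c$ can be chosen so both pieces are nontrivial); beyond that, the argument is a straightforward application of the machinery already assembled in Sections~1--3.
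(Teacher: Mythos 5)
Your proposal does not address the statement at all. The statement is a \emph{conjecture}, left open by the paper: it asserts that \emph{every} measure $\mu'$ in $\mathfrak{P}(G,\mu)$ --- that is, every measure whatsoever whose Poisson boundary coincides with $\Gamma(G,\mu)$ --- arises as the exit measure $\mu_T$ of some Markov stopping time $T$ on some extension $(G\times X,\mu\otimes m)$. This is a universal converse to Theorem~\ref{col}. A proof would have to start from an arbitrary $\mu'\in\mathfrak{P}$, about which one knows only the measure-theoretic identification of the two boundaries, and manufacture from that data an auxiliary space $X$ and a stopping time $T$ with $\mu_T=\mu'$. Nothing in your argument does this: you never quantify over $\mathfrak{P}$, and you never produce a stopping time realizing a given target measure. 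The paper itself offers no proof and explicitly lists this as a direction for further work.

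What you have written is instead a proof of Theorem~\ref{willis}: take $n$ with $\mu^{*n}$ non-singular with respect to Haar measure, Lebesgue-decompose, truncate the density at a level $c$ so that $B=\{d\tau/dm<c\}$ has positive mass, set $\alpha=\upsilon+\tau|_{B^c}$ and $\beta=\tau|_B$, and invoke Example~\ref{main2}. That is essentially verbatim the paper's proof of that theorem, and your additional verification that the density of $(1-\alpha)^{-1}*\beta=\sum_k\alpha^{*k}*\beta$ is bounded by $c/(1-|\alpha|)$ is a correct and worthwhile elaboration of the boundedness claim the paper asserts without detail. But that construction produces \emph{one} new measure in $\mathfrak{P}$; it says nothing about whether \emph{all} of $\mathfrak{P}$ is exhausted by the stopping-time construction. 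The conjecture remains untouched.
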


\begin{bibdiv}
\begin{biblist}
\bib{D80}{article}{
   author={Derriennic, Yves},
   title={Quelques applications du th\'eor\`eme ergodique sous-additif},
   language={French, with English summary},
   conference={
      title={Conference on Random Walks},
      address={Kleebach},
      date={1979},
   },
   book={
      series={Ast\'erisque},
      volume={74},
      publisher={Soc. Math. France},
      place={Paris},
   },
   date={1980},
   pages={183--201, 4},
   review={\MR{588163 (82e:60013)}},
}

\bib{DM61}{article}{
   author={Dynkin, E. B.},
   author={Maljutov, M. B.},
   title={Random walk on groups with a finite number of generators},
   language={Russian},
   journal={Dokl. Akad. Nauk SSSR},
   volume={137},
   date={1961},
   pages={1042--1045},
   issn={0002-3264},
   review={\MR{0131904 (24 \#A1751)}},
}
\bib{Fu63}{article}{
   author={Furstenberg, Harry},
   title={A Poisson formula for semi-simple Lie groups},
   journal={Ann. of Math. (2)},
   volume={77},
   date={1963},
   pages={335--386},
   issn={0003-486X},
   review={\MR{0146298 (26 \#3820)}},
}
\bib{Fu70}{article}{
   author={Furstenberg, Harry},
   title={Boundaries of Lie groups and discrete subgroups},
   conference={
      title={Actes du Congr\`es International des Math\'ematiciens},
      address={Nice},
      date={1970},
   },
   book={
      publisher={Gauthier-Villars},
      place={Paris},
   },
   date={1971},
   pages={301--306},
   review={\MR{0430160 (55 \#3167)}},
}
\bib{G63}{book}{
   author={Grenander, Ulf},
   title={Probabilities on algebraic structures},
   publisher={John Wiley \& Sons Inc.},
   place={New York},
   date={1963},
   pages={218},
   review={\MR{0206994 (34 \#6810)}},
}
\bib{K92}{article}{
   author={Kaimanovich, Vadim A.},
   title={Discretization of bounded harmonic functions on Riemannian
   manifolds and entropy},
   conference={
      title={Potential theory},
      address={Nagoya},
      date={1990},
   },
   book={
      publisher={de Gruyter},
      place={Berlin},
   },
   date={1992},
   pages={213--223},
   review={\MR{1167237 (94b:31007)}},
}
\bib{Ka92}{article}{
   author={Kaimanovich, Vadim A.},
   title={Measure-theoretic boundaries of Markov chains, $0$-$2$ laws and
   entropy},
   conference={
      title={Harmonic analysis and discrete potential theory},
      address={Frascati},
      date={1991},
   },
   book={
      publisher={Plenum},
      place={New York},
   },
   date={1992},
   pages={145--180},
   review={\MR{1222456 (94h:60099)}},
}
\bib{K00}{article}{
   author={Kaimanovich, Vadim A.},
   title={The Poisson formula for groups with hyperbolic properties},
   journal={Ann. of Math. (2)},
   volume={152},
   date={2000},
   number={3},
   pages={659--692},
   issn={0003-486X},
   review={\MR{1815698 (2002d:60064)}},
   doi={10.2307/2661351},
}
\bib{Ke67}{article}{
   author={Kesten, H.},
   title={The Martin boundary of recurrent random walks on countable groups},
   conference={
      title={Proc. Fifth Berkeley Sympos. Math. Statist. and Probability
      (Berkeley, Calif., 1965/66)},
   },
   book={
      publisher={Univ. California Press},
      place={Berkeley, Calif.},
   },
   date={1967},
   pages={Vol. II: Contributions to Probability Theory, Part 2, pp. 51--74},
   review={\MR{0214137 (35 \#4988)}},
}
\bib{KV83}{article}{
   author={Kaimanovich, V. A.},
   author={Vershik, A. M.},
   title={Random walks on discrete groups: boundary and entropy},
   journal={Ann. Probab.},
   volume={11},
   date={1983},
   number={3},
   pages={457--490},
   issn={0091-1798},
   review={\MR{704539 (85d:60024)}},
}
\bib{Mu06}{article}{
   author={Muchnik, Roman},
   title={A note on stationarity of spherical measures},
   journal={Israel J. Math.},
   volume={152},
   date={2006},
   pages={271--283},
   issn={0021-2172},
   review={\MR{2214464 (2006m:60110)}},
   doi={10.1007/BF02771987},
}
\bib{DR84}{book}{
   author={Revuz, D.},
   title={Markov chains},
   series={North-Holland Mathematical Library},
   volume={11},
   edition={2},
   publisher={North-Holland Publishing Co.},
   place={Amsterdam},
   date={1984},
   pages={xi+374},
   isbn={0-444-86400-8},
   review={\MR{758799 (86a:60097)}},
}
\bib{Ro67}{article}{
   author={Rohlin, V. A.},
   title={Lectures on the entropy theory of transformations with invariant
   measure},
   language={Russian},
   journal={Uspehi Mat. Nauk},
   volume={22},
   date={1967},
   number={5 (137)},
   pages={3--56},
   issn={0042-1316},
   review={\MR{0217258 (36 \#349)}},
}
\bib{R81}{article}{
   author={Rosenblatt, Joseph},
   title={Ergodic and mixing random walks on locally compact groups},
   journal={Math. Ann.},
   volume={257},
   date={1981},
   number={1},
   pages={31--42},
   issn={0025-5831},
   review={\MR{630645 (83f:43002)}},
   doi={10.1007/BF01450653},
}
\bib{W90}{article}{
   author={Willis, G. A.},
   title={Probability measures on groups and some related ideals in group
   algebras},
   journal={J. Funct. Anal.},
   volume={92},
   date={1990},
   number={1},
   pages={202--263},
   issn={0022-1236},
   review={\MR{1064694 (91i:43003)}},
   doi={10.1016/0022-1236(90)90075-V},
}
\bib{DW91}{book}{
   author={Williams, David},
   title={Probability with martingales},
   series={Cambridge Mathematical Textbooks},
   publisher={Cambridge University Press},
   place={Cambridge},
   date={1991},
   pages={xvi+251},
   isbn={0-521-40455-X},
   isbn={0-521-40605-6},
   review={\MR{1155402 (93d:60002)}},
}
\end{biblist}
\end{bibdiv}

\end{document}